\newtheoremstyle{mystyle}
{11pt}                          
{11pt}                          
{}                                      
{}                                      
{\bfseries}                     
{}                                      
{5.5pt}                         
{}                                      
\theoremstyle{mystyle}
\newtheorem{theorem}{Theorem}[section]
\newtheorem{corollary}[theorem]{Corollary}
\newtheorem{example}[theorem]{Example}
\renewenvironment{proof}[1][Proof.]{\vspace{-16.5pt} \begin{trivlist}
        \item[\hskip \labelsep {\bfseries #1}]}{\qed \end{trivlist}}
\appto\normalsize{
        \abovedisplayskip=5.5pt plus 2pt minus 2pt
        \belowdisplayskip=5.5pt plus 2pt minus 2pt
        \abovedisplayshortskip=5.5pt plus 2pt minus 2pt
        \belowdisplayshortskip=5.5pt plus 2pt minus 2pt}
\appto\small{
        \abovedisplayskip=5.5pt plus 2pt minus 2pt
        \belowdisplayskip=5.5pt plus 2pt minus 2pt
        \abovedisplayshortskip=5.5pt plus 2pt minus 2pt
        \belowdisplayshortskip=5.5pt plus 2pt minus 2pt}
\newcommand{\gap}{\vspace{11pt}}
\newcommand{\diag}{\operatorname{Diag}}
\newcommand{\tr}{\operatorname{tr}}
\newcommand{\R}{\mathcal{R}}
\newcommand{\Rn}{\mathcal{R}^n}
\newcommand{\Sn}{\mathcal{S}^n}
\newcommand{\Mn}{\mathcal{M}^n}
\newcommand{\Hn}{\mathcal{H}^n}
\newcommand{\V}{{\cal V}}
\newcommand{\one}{{\bf 1}}
\title{\bf  Some  majorization inequalities induced by   \\ 
Schur products in Euclidean Jordan algebras}
\author{
        M. Seetharama Gowda\\
        Department of Mathematics and Statistics\\
        University of Maryland, Baltimore County\\
        Baltimore, Maryland 21250, USA\\
        gowda@umbc.edu
}
\date{\today}
\begin{document}

\maketitle

\begin{abstract}
In an  Euclidean Jordan algebra $\V$ of rank $n$, an element $x$ is said to be majorized by an element $y$, symbolically $x\prec y$, if the corresponding eigenvalue vector $\lambda(x)$  is majorized by $\lambda(y)$ in $\Rn$.
In this article, we describe pointwise majorization inequalities of the form $T(x)\prec S(x)$, where $T$ and $S$ are linear transformations induced by Schur products. Specializing, we recover analogs of majorization inequalities of Schur, Hadamard, and Oppenheimer
 stated in the setting of Euclidean Jordan algebras, as well as majorization inequalities connecting quadratic and Lyapunov transformations on $\V$.
We also show how Schur products induced by certain scalar means (such as arithmetic, geometric, harmonic, and logarithmic means) naturally lead to majorization inequalities.
\end{abstract}

\vspace{1cm}
\noindent{\bf Key Words:}
Euclidean Jordan algebra, majorization, doubly stochastic transformation, Schur-product induced transformation, correlation matrix. 
\\

\noindent{\bf AMS Subject Classification:} 17C99, 15A18, 15A48, 15B51, 15B57. 
\newpage

\section{Introduction}
The concept of majorization in $\Rn$ is well-known  in  matrix theory, probability and statistics, 
numerical analysis, etc., see e.g., \cite{marshall-olkin}. Given two (column) vectors $p$ and $q$ in $\Rn$, 
$p$ is said to be majorized by $q$, symbolically $p\prec q$,   if 
\begin{equation}\label{weak majorization}
\sum_{i=1}^{k}p^\downarrow_i\leq \sum_{i=1}^{k}q^\downarrow_i
\end{equation}
for all $1\leq k\leq n$ with equality for $k=n$, where $p^\downarrow$ denotes the decreasing rearrangement of $p$, etc.
By a theorem of Hardy, Littlewood, and P\'{o}lya \cite{bhatia1}, $p\prec q$ 
if and only if $p=Dq$, where $D$ is a doubly stochastic matrix, that is, $D$ is a (square) nonnegative matrix with all row and column sums equal to $1$. By a theorem of Birkhoff \cite{bhatia1}, such a $D$ is a convex combination of permutation matrices.
\\

The above concepts and results have been extended to matrices. Let $\Sn$ and $\Hn$ denote, respectively, the spaces of all $n\times n$ real and complex Hermitian matrices, and $\Mn$ denote the space of all $n\times n$ complex matrices. For any matrix $X\in \Hn$, let $\lambda(X)$ denote the vector of eigenvalues of $X$ written in the decreasing order. Then, majorization in $\Hn$ is defined by: $X\prec Y$ if and only if $\lambda(X)\prec \lambda(Y)$ in $\Rn$. This happens if and only if 
there is a linear transformation $\Phi:\Hn\rightarrow \Hn$ (equivalently, $\Phi:\Mn\rightarrow \Mn$) 
which is doubly stochastic (meaning that it is positive, unital, and trace preserving) such that $X=\Phi(Y)$ see, \cite{ando1}, Theorem 7.1,  \cite{zhan}, p. 23, or \cite{bhatia2}, p. 59.
Going beyond matrices, there is an extensive literature on majorization and related topics in such  settings 
as von Neumann algebras, $C^*$-algebras, and Eaton  triples, 
see e.g., \cite{ando2, alberti-uhlmann, ng et al, eaton-perlman}, and references therein. \\

In this article, we focus on majorization in Euclidean Jordan algebras. Let $(\V,\circ,\langle\cdot,\cdot\rangle)$ denote an 
Euclidean Jordan algebra of rank $n$ \cite{faraut-koranyi};
 the algebras $\Sn$ and $\Hn$ are two  primary examples. Letting $\lambda(x)$ denote  the vector of eigenvalues of $x\in \V$ written in the decreasing order, one defines majorization in $\V$:
$$x\prec y \,\,\mbox{in}\,\,\V \Leftrightarrow \lambda(x)\prec \lambda(y)\,\,\mbox{in}\,\,\Rn.$$ 
This concept has been studied in several recent works \cite{tao et al, gowda-positive-map, jeong-gowda}. Replacing permutation matrices by automorphisms of $\V$ and doubly stochastic matrices by doubly stochastic transformations
(see Section 2  for definitions), one can reformulate/extend classical results mentioned earlier to Euclidean Jordan algebras (\cite{gowda-positive-map}, Theorem 6). In particular, if $\Phi:\V\rightarrow \V$ is a doubly stochastic transformation, then
$$\Phi(x)\prec x\,\,\mbox{for all}\,\,x\in \V.$$

In an Euclidean Jordan algebra $\V$, corresponding to an element $a$, the Lyapunov transformation $L_a$ and the quadratic representation $P_a$ are defined by 

\begin{equation}\label{la and pa}
L_a(x):=a\circ x\quad \mbox{and}\quad P_a(x):=2\,a\circ (a\circ x)-a^2\circ x\quad (x\in \V).
\end{equation}
Exploiting the special nature of these transformations that they are induced by `Schur products', in  a recent article \cite{gowda-holder}, the following pointwise
 majorization inequality was proved:
\[ \label{pa majorized by lasquare}
P_a(x)\prec L_{a^2}(x)\,\,\mbox{for all}\,\,x\in \V.
\] 

Motivated by the above result, in this article, we broaden its proof methodology  to describe  pointwise majorization inequalities  of the form 
$T(x)\prec S(x),$
where $T$ and $S$ are linear transformations induced by Schur products.
To elaborate, we fix a Jordan frame $\{e_1,e_2,\ldots, e_n\}$ in $\V$ and consider the corresponding Peirce decomposition of $\V$ and $x\in \V$ \cite{faraut-koranyi}:
$\V=\sum_{1\leq i\leq j\leq n}V_{ij}$ and  $x=\sum_{i\leq j}x_{ij}$, where $x_{ij}\in \V_{ij}$ for all $i\leq j$. Then, for any matrix $A\in \Sn$, we define 
\begin{equation} \label{bullet product}
A\bullet x:=\sum_{i\leq j}a_{ij}\,x_{ij}. 
\end{equation}
The above product -- which we call a {\it Schur product} -- can be regarded as a generalization of the usual 
Schur/Hadamard product of two real matrices. (Since $\V$ is a real vector space, the above product is restricted to real symmetric matrices $A$.) In \cite{gowda-tao-sznajder},  linear transformations of the form  $x\mapsto A\bullet x$ are
 called  Peirce-diagonalizable transformations. 
Both $L_a$ and $P_a$ turn out to be Peirce-diagonalizable transformations under suitable Jordan frames, see Section 2.
The aim of the paper is to describe  results that state/imply inequalities of the form 
\begin{equation}\label{goal inequality}
A\bullet x\prec B\bullet x\,\,\mbox{for all}\,\,x\in \V,
\end{equation}
where $A$ and $B$ are suitable matrices in $\Sn$. In particular, analogous to a matrix theory result of Bapat and Sunder (\cite{bapat-sunder}, Corollary 2), we show that the inequality 
$A\bullet x\prec x$ holds for all $x$ in $\V$ when $A$ is a correlation matrix (i.e., $A$ is a real positive semidefinite matrix  
having all diagonal entries $1$) with the converse statement holding when $\V$ is a simple algebra. As a consequence, we show that 
if $A=[a_{ij}]$ and $B=[b_{ij}]$ belong to $\Sn$ and 
the matrix $\Big [\,\frac{a_{ij}}{b_{ij}}\,\Big ]$ is a correlation matrix, then (\ref{goal inequality}) holds.
Specializing this result, we recover analogs of 
 inequalities of Schur,
Hadamard, and Oppenheimer stated in the setting of Euclidean Jordan algebras as well as the the above mentioned result relating $P_a$ and $L_{a^2}$.  
We also show how Schur products induced by certain scalar means (such as arithmetic, geometric, harmonic, and logarithmic means) naturally lead to majorization inequalities. For example, when $a$ is a positive element in $\V$ (that is, when all eigenvalues of $a$ are positive), 
\begin{equation}\label{long chain}
(L_{a^{-1}})^{-1}(x)\prec P_{\sqrt{a}}(x)\prec  \int_{0}^{1}P_{a^t,a^{1-t}}(x)\,dt\prec L_a(x)\quad\mbox{for all}\,\,x\in \V,
\end{equation}
where $P_{u,v}:=L_u\,L_v+L_v\,L_u-L_{u\circ v}$.
By way of an application, we deduce inequalities of the form 
$$F(A\bullet x)\leq F(B\bullet x)\,\,\mbox{for all}\,\,x\in \V,$$
where 
$F=f\circ \lambda$ (the composition of $f$ and $\lambda$), with $f:\Rn\rightarrow \R$  permutation invariant and convex.
Specializing, we get  eigenvalue inequalities of the form 
$\lambda_{max}(A\bullet x)\leq \lambda_{max}(B\bullet x)$, $\lambda_{min}(B\bullet x)\leq \lambda_{min}(A\bullet x)$ as well as 
norm inequalities of the form $||A\bullet x||_{sp}\leq ||B\bullet x||_{sp}$, where $||\cdot||_{sp}$ is a spectral norm on $\V$ (which arises as the composition of a permutation invariant norm on $\Rn$ and $\lambda$).

The results of  this paper are strongly motivated by the matrix theory works of Bapat and Sunder \cite{bapat-sunder}, Ando \cite{ando1, ando2}, and presentations found in Zhan (\cite{zhan}, Section 2.2) and Bhatia (\cite{bhatia2}, Chapters 4 and 5).
\section{Preliminaries}
The  Euclidean $n$-space over $\R$ (with usual inner product and norm) is denoted by $\Rn$. Matrices are written in the form  $A=[a_{ij}]$, were $a_{ij}$ denotes the $(i,j)$th entry. The $k\times k$ matrix with every entry $1$ is denoted by ${\bf 1}_{k\times k}$. A real symmetric or complex Hermitian matrix is positive semidefinite if all its eigenvalues are nonnegative 
or, equivalently, the corresponding quadratic form is nonnegative. Given two real/complex matrices $[a_{ij}]$ and $[b_{ij}]$, their (usual) Schur product is the matrix $[a_{ij}b_{ij}]$. \\
 
Throughout this paper, $(\V, \circ,\langle\cdot,\cdot\rangle)$ denotes an Euclidean Jordan algebra of rank 
$n$ and unit element $e$ \cite{faraut-koranyi, gowda-sznajder-tao}
 with $x\circ y$ denoting the Jordan product and $\langle x,y\rangle$ 
denoting the inner product of $x$ and $y$ in $\V$. 
Any Euclidean Jordan algebra is a direct product/sum 
of simple Euclidean Jordan algebras and every simple Euclidean Jordan algebra is isomorphic to one of five algebras, 
three of which are the algebras of $n\times n$ real/complex/quaternion Hermitian matrices. The other two are: the algebra of $3\times 3$ octonion Hermitian matrices and the Jordan spin algebra.
In the algebras $\Sn$ (of all $n\times n$ real symmetric matrices) and $\Hn$ (of all $n\times n$ complex Hermitian matrices), the Jordan product and the inner product are given, respectively, by 
$$X\circ Y:=\frac{XY+YX}{2}\quad\mbox{and}\quad \langle X,Y\rangle:=\tr(XY),$$
where the trace of a real/complex matrix is the sum of its diagonal entries.

According to the {\it spectral decomposition 
theorem} \cite{faraut-koranyi}, any element $x\in \V$ has a decomposition
$$x=x_1e_1+x_2e_2+\cdots+x_ne_n,$$
where the real numbers $x_1,x_2,\ldots, x_n$ are (called) the eigenvalues of $x$ and 
$\{e_1,e_2,\ldots, e_n\}$ is a Jordan frame in $\V$. (An element may have decompositions coming from different Jordan frames, but the eigenvalues remain the same.) Then, $\lambda(x)$ -- called the {\it eigenvalue vector} of $x$ -- is the vector of eigenvalues of $x$ written in the decreasing order. It is known that $\lambda:\V\rightarrow \Rn$ is (Lipschtiz) continuous \cite{baes}. 
An element  $x$ is said to be {\it invertible} if all its eigenvalues are nonzero; such elements form a dense subset of $\V$.

We use the notation $x\geq 0$ ($x> 0$) when all the eigenvalues of $x$ are nonnegative (respectively, positive). 
When $x\geq 0$ with spectral decomposition $x=x_1e_1+x_2e_2+\cdots+x_ne_n$, we define $\sqrt{x}:=\sqrt{x_1}e_1+\sqrt{x_2}e_2+\cdots+\sqrt{x_n}e_n$; more generally, for a nonnegative number $t$, $x^t:=\sum_{i=1}^{n}x_i^t\,e_i$. 

If $x_1,x_2,\ldots, x_n$ are the eigenvalues of  $x\in \V$, we define the {\it trace and determinant} of $x$ by
$$\tr(x):=x_1+x_2+\cdots+x_n\quad \mbox{and}\quad \det(x):=x_1x_2\cdots x_n.$$
For any index $p\in [1,\infty]$, we define the spectral $p$-norm on $\V$ by 
$$||x||_p:=||\lambda(x)||_p\quad (x\in \V),$$
where $||\lambda(x)||_p$ is the usual $p$-norm of the vector $\lambda(x)$ in 
$\Rn$. Because of continuity of $\lambda$,  we see that the 
trace, determinant, and  spectral $p$-norm
 are continuous. 

It is  known that $(x,y)\mapsto \tr(x\circ y)$ defines another inner product on $\V$ that is compatible with the Jordan product. 
{\it Throughout this paper, we assume that the inner product on $\V$ 
is the trace inner product, that is,
$\langle x,y\rangle=\tr(x\circ y).$}
In this inner product, the norm of any primitive element is one and so any Jordan frame
in $\V$ is an orthonormal set. Additionally,
$\tr(x)=\langle x,e\rangle \quad\mbox{for all}\,\,x\in \V.$

\gap

Given a Jordan frame  $\{e_1,e_2,\ldots, e_n\}$, 
we have the Peirce orthogonal decomposition (\cite{faraut-koranyi}, Theorem IV.2.1):
$\V=\sum_{i\leq j}\V_{ij},$
where $\V_{ii}:=\{x\in \V: x\circ e_i=x\}=\R\,e_i$ and for $i<j$, $\V_{ij}:=\{x\in \V: x\circ e_i=\frac{1}{2}x=x\circ e_j\}.$
Then, for any $x\in \V$ we have 
\begin{equation} \label{long peirce decomposition}
x=\sum_{i\leq j}x_{ij}\quad\mbox{with}\quad  x_{ij}\in \V_{ij}.
\end{equation}
Corresponding to this decomposition and any $A\in \Sn$, we define the Schur product $A\bullet x$ by (\ref{bullet product}).
Numerous properties of this product and related transformations  are covered in \cite{gowda-tao-sznajder, sznajder-gowda-moldovan}. One important result is the following.

\begin{theorem} (\cite{sznajder-gowda-moldovan}, Theorem 4 and \cite{gowda-tao-sznajder}, Theorem 3.1) \label{positivity}
{\it For $A\in \Sn$, consider the following statements:
\begin{itemize}
\item [$(a)$] $A$ is positive semidefinite.
\item [$(b)$] The implication $x\geq 0\Rightarrow A\bullet x\geq 0$ holds in $\V$.
\end{itemize}
Then, $(a)\Rightarrow (b)$. The converse holds when $\V$ is simple.
}
\end{theorem}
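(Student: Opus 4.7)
The plan for $(a)\Rightarrow(b)$ is to reduce to the rank-one case via a spectral decomposition $A=\sum_{k}v_k v_k^{\T}$ with $v_k\in\Rn$ and then identify each rank-one Schur product with a quadratic representation. Setting $c_k:=\sum_{i}(v_k)_i\,e_i\in\V$, a direct computation using the Peirce relations shows that $L_{c_k}$ acts on $\V_{ij}$ as the scalar $((v_k)_i+(v_k)_j)/2$, so $P_{c_k}=2L_{c_k}^2-L_{c_k^2}$ acts as $(v_k)_i(v_k)_j$ on $\V_{ij}$; this is exactly the scalar by which $(v_k v_k^{\T})\bullet(\cdot)$ multiplies the $\V_{ij}$-component. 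Hence $(v_k v_k^{\T})\bullet x=P_{c_k}(x)$. Since quadratic representations preserve the cone of squares in every Euclidean Jordan algebra, each $P_{c_k}(x)\geq 0$ whenever $x\geq 0$, and summing closes $(a)\Rightarrow(b)$.

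For the converse under the simplicity hypothesis, my plan mirrors the matrix case $\V=\Sn$, where the Hadamard identity $A\bullet(\xi\xi^{\T})=D_{\xi}AD_{\xi}\geq 0$, applied to $\xi\in\Rn$ with no zero entry, forces $A$ to be positive semidefinite. In a general simple algebra I would construct, for each such $\xi$, a PSD element $x_{\xi}\in\V$ whose Peirce expansion has $\xi_i^2\,e_i$ on $\V_{ii}$ and, on $\V_{ij}$, a prescribed multiple of $\xi_i\xi_j$ along a fixed unit $u_{ij}\in\V_{ij}$ (normalized so that $u_{ij}^2=e_i+e_j$). Simplicity enters exactly here, because it guarantees $\V_{ij}\neq\{0\}$ for every $i<j$, so such $u_{ij}$ exist. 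A natural candidate is $x_{\xi}=c_{\xi}^2$ with $c_{\xi}=\sum_{i}\xi_i\,e_i+\sum_{i<j}\alpha_{ij}\,u_{ij}$, with the coefficients $\alpha_{ij}$ tuned so that the $\V_{ij}$-part of $c_{\xi}^2$ is proportional to $\xi_i\xi_j\,u_{ij}$.

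With $x_{\xi}$ in hand, $A\bullet x_{\xi}$ would have Peirce components $a_{ii}\xi_i^2\,e_i$ and (up to controlled corrections) $a_{ij}\xi_i\xi_j\,u_{ij}$; testing it inside each rank-two sub-Jordan algebra spanned by $\{e_i,e_j,u_{ij}\}$ would read off the $2\times 2$ constraint $a_{ii}a_{jj}\geq a_{ij}^2$, and a trace pairing against a general PSD test element $y\in\V$ should upgrade this to $\xi^{\T}A\xi\geq 0$ for every $\xi$. The principal obstacle is that the cross products $u_{ij}\circ u_{kl}$ are typically nonzero whenever $\{i,j\}$ and $\{k,l\}$ share exactly one index (they land in a third Peirce subspace), so $c_{\xi}^2$ acquires spurious components outside the target pattern. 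I expect to neutralize these either by activating one $u_{ij}$ at a time and recombining several test elements, or by averaging over the sign-change automorphisms $u_{ij}\mapsto\pm u_{ij}$ of $\V$ (standard in simple EJAs) to annihilate the odd cross terms, thereby recovering the clean Peirce profile needed to conclude that $A$ is positive semidefinite.
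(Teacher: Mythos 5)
Your argument for $(a)\Rightarrow(b)$ is correct and complete: decomposing a positive semidefinite $A=\sum_k v_kv_k^{\T}$, checking through the Peirce relations that $(v_kv_k^{\T})\bullet x=P_{c_k}(x)$ with $c_k=\sum_i(v_k)_ie_i$, and using positivity of quadratic representations is exactly the standard route. (Note the paper does not prove this theorem at all; it quotes it from the two cited references, so there is no in-paper proof to compare with.)

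The converse, however, has a genuine gap, and it sits exactly where you placed your hopes. What simplicity must supply is not merely $\V_{ij}\neq\{0\}$ and elements $u_{ij}$ with $u_{ij}^2=e_i+e_j$, but a \emph{coherent} family of such elements, i.e.\ one obeying the Peirce multiplication rules $u_{ij}\circ u_{jk}=\tfrac12u_{ik}$ and $u_{ij}\circ u_{kl}=0$ for disjoint pairs. Without coherence the element $x_\xi=\sum_i\xi_i^2e_i+\sum_{i<j}\xi_i\xi_ju_{ij}$ need not lie in the cone: already in $\mathcal{S}^3$, choosing $u_{23}$ with the ``wrong'' sign and $\xi=(1,1,1)$ produces the matrix with unit diagonal, $(1,2)$ and $(1,3)$ entries $1$ and $(2,3)$ entry $-1$, which has negative determinant. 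Your two fallback strategies cannot repair this. Activating one $u_{ij}$ at a time only yields the $2\times2$ conditions $a_{ii}\geq0$, $a_{ii}a_{jj}\geq a_{ij}^2$, which do not imply $A\succeq0$ for $n\geq3$ (take $a_{ii}=1$, $a_{ij}=-1$); and summing the pairwise test elements inflates the diagonal, giving only $(n-1)^2\sum_ia_{ii}\xi_i^2+2\sum_{i<j}a_{ij}\xi_i\xi_j\geq0$, which is strictly weaker than $\xi^{\T}A\xi\geq0$. As for sign averaging: an automorphism fixing every $e_i$ and acting as a sign $s_{ij}$ on $\V_{ij}$ (the ones available in general are $P_w$ with $w=\sum_i\epsilon_ie_i$, which act by $s_{ij}=\epsilon_i\epsilon_j$) multiplies the spurious cross term $u_{ij}\circ u_{jk}\in\V_{ik}$ by $s_{ij}s_{jk}$ and the wanted term $u_{ik}$ by $s_{ik}=s_{ij}s_{jk}$; the two transform identically, so no averaging over such automorphisms can annihilate one while retaining the other. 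Finally, with your candidate $c_\xi^2$ the diagonal Peirce components are $(\xi_i^2+\sum_{j\neq i}\alpha_{ij}^2)e_i$ rather than $\xi_i^2e_i$, a further uncorrected distortion, and the concluding ``trace pairing against a general PSD $y$'' needs a PSD $y$ with prescribed nonzero components in every $\V_{ij}$ --- which again presupposes the coherent system.

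The clean way to close the argument is to invoke (or prove) the structural fact that in a simple algebra the $u_{ij}$ can be chosen coherently; equivalently, there is a Jordan homomorphism of $\Sn$ into $\V$ sending $E_{ii}\mapsto e_i$ and $E_{ij}+E_{ji}\mapsto u_{ij}$. Then $x_\xi$ is the image of $\xi\xi^{\T}$, hence a square in $\V$, so $A\bullet x_\xi\geq0$ by $(b)$; pairing with the PSD element $x_{\mathbf 1}=e+\sum_{i<j}u_{ij}$ (the image of $\mathbf{1}\mathbf{1}^{\T}$) and using self-duality of the cone together with $\|e_i\|^2=1$, $\|u_{ij}\|^2=2$ in the trace inner product gives $\xi^{\T}A\xi=\langle A\bullet x_\xi,\,x_{\mathbf 1}\rangle\geq0$. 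That structural lemma (or an equivalent substitute, such as a primitive idempotent whose Peirce components realize an arbitrary rank-one sign pattern) is the missing ingredient you must either establish or cite.
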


To see an example, consider the {\it canonical} Jordan frame $\{E_1,E_2,\ldots, E_n\}$ in $\Sn$, where $E_i$ is the diagonal matrix in $\Sn$ with  $1$ in the $(i,i)$ slot and $0$ elsewhere. Then the corresponding Peirce decomposition of any $X\in \Sn$ results in the standard matrix representation $X=[x_{ij}]$ and the Schur product of $A=[a_{ij}]\in \Sn$ with $X$ results in the usual Schur product $A\bullet X=[a_{ij}x_{ij}]$. (Here we have used the
`bullet' notation, reserving the `circle' notation for the 
Jordan product.)
In this setting, Theorem \ref{positivity} includes  the well-known {\it Schur product theorem}:
{\it if $A=[a_{ij}]$ and $X=[x_{ij}]$ in $\Sn$ are positive semidefinite, then so is their  Schur product $[a_{ij}x_{ij}]$.} 
\\

A nonzero element $c$ in $\V$ is an idempotent if $c^2=c$. Corresponding to such an element, we have (also called 
 Peirce decomposition of $\V$) \cite{faraut-koranyi}:
$$\V=\V(c,1)\oplus\V(c,\frac{1}{2})\oplus \V(c,0),$$
where $\V(c,\gamma):=\{x\in \V:x\circ c=\gamma\,x\}$ and $\gamma\in \{0,\frac{1}{2},1\}.$ For any $x\in \V$, this  
yields the decomposition 
\begin{equation}\label{short peirce decomposition}
x=u+v+w,
\end{equation}
where $u\in \V(c,1)$, $v\in \V(c,\frac{1}{2})$, and $w\in \V(c,0).$
To connect this with 
(\ref{long peirce decomposition}), we write the spectral decomposition of $c$ as:
$c=e_1+e_2+\cdots+e_k+0\,e_{k+1}+\cdots+0\,e_n$ for some $k$, $1\leq k\leq n$.  Corresponding to the induced Jordan frame, (\ref{long peirce decomposition}) leads to  
\begin{equation}\label{full peirce decomposition}
x= \sum_{1\leq i\leq j\leq k}x_{ij}\,+\,\sum_{1\leq i\leq k,\, k+1\leq j\leq n}x_{ij}\,+\,\sum_{k+1\leq i\leq j\leq n}x_{ij}.
\end{equation}
This is of the form (\ref{short peirce decomposition}) with  $u=\sum_{1\leq i\leq j\leq k}x_{ij}$, etc. 
Going in the reverse direction, if we have a decomposition of the form (\ref{short peirce decomposition}) coming from an idempotent, then we can find a Jordan frame $\{e_1,e_2,\ldots, e_n\}$ with $c=e_1+e_2+\cdots+e_k$ for some $k$, $1\leq k\leq n$, for which $x$ could be written in the form (\ref{full peirce decomposition}).

For $a\in \V$, we define the transformations $L_a$ and $P_a$ on $\V$ by (\ref{la and pa}). When $\V=\Sn$, with $A$ in place of $a$ and $X$ in place of $x$, these are given by 
$$L_A(X)=\frac{AX+XA}{2}\quad\mbox{and}\quad P_A(X)=AXA.$$
Now suppose $a\in \V$ with its spectral decomposition $a=a_1e_1+a_2e_2+\cdots+a_ne_n$. Let $x=\sum_{i\leq j}x_{ij}$ be the Peirce decomposition of an $x$ with respect to the Jordan frame $\{e_1,e_2,\ldots, e_n\}$. Then, we can describe $L_a$ and $P_a$ in the following form (see \cite{gowda-tao-sznajder}, p. 720):
\begin{equation}\label{schur form of la and pa}
L_a(x)=\sum_{i\leq j}\frac{a_i+a_j}{2}x_{ij}\quad\mbox{and}\quad P_a(x)=\sum_{i\leq j}a_i\,a_j\,x_{ij}.
\end{equation}
So, these two transformations are of the form $x\mapsto A\bullet x$ for suitable $A$.
We mention two well-known properties of $P_a$:
\begin{itemize}
\item [$\bullet$] $x\geq 0\Rightarrow P_a(x)\geq 0$.
\item [$\bullet$] When $a$ is invertible,  $x> 0\Rightarrow P_a(x)> 0$.
\end{itemize}

\gap

A linear transformation $T:\V\rightarrow \V$ is said to be
{\it positive} if $x\geq 0\Rightarrow T(x)\geq 0$, {\it unital\,} if $T(e)=e$, and {\it trace preserving\,} if $\tr(T(x))=\tr(x)$ for all $x$. $T$ is said to be a {\it doubly stochastic transformation} if it satisfies the above three properties.
An invertible linear transformation $T$ on $\V$ is an {\it automorphism} if 
$T(x\circ y)=T(x)\circ T(y)$ for all $x,y\in \V$. 

The following result connects doubly stochastic transformations with majorization in Euclidean Jordan algebras. For other related results, see \cite{gowda-positive-map}.

\begin{theorem}\label{majorization theorem} (\cite{gowda-positive-map}, Theorem 6)
{\it For $x,y\in \V$, consider the following statements:
\begin{itemize}
\item [(a)] $x=\Phi(y)$, where $\Phi$ is a convex combination of automorphisms of $\V$.
\item [(b)] $x=\Phi(y)$, where $\Phi$ is doubly stochastic on $\V$.
\item [(c)] $x\prec y$.
\end{itemize}
Then, $(a)\Rightarrow (b)\Rightarrow(c)$. Furthermore, when $\V$ is $\Rn$  or  simple, the  reverse implications hold.
}
\end{theorem}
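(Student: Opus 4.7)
The plan is to establish $(a)\Rightarrow(b)\Rightarrow(c)$ in general and then treat the reverse implications separately in the two listed cases. For $(a)\Rightarrow(b)$, every automorphism $\psi$ of $\V$ preserves the Jordan product, hence fixes the unit ($\psi(e)=e$), sends squares to squares (so is positive), and carries spectral decompositions to spectral decompositions (so preserves eigenvalues and, in particular, traces). These three properties are preserved under convex combinations, so a convex combination of automorphisms is doubly stochastic, giving $(a)\Rightarrow(b)$ immediately.

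For $(b)\Rightarrow(c)$, write $x=\Phi(y)$ with $\Phi$ doubly stochastic. Trace-preservation yields $\sum_i\lambda_i(x)=\sum_i\lambda_i(y)$, which is the majorization equality at $k=n$. For the partial sums, I would invoke the EJA analog of the Ky Fan maximum principle,
$$\sum_{i=1}^k\lambda_i^\downarrow(x)=\max\{\langle x,c\rangle : c^2=c,\ \tr(c)=k\},$$
pick an optimal idempotent $c$, and pass to the adjoint to obtain $\sum_{i=1}^k\lambda_i^\downarrow(x)=\langle y,\Phi^*(c)\rangle$. The adjoint $\Phi^*$ is again doubly stochastic (positivity from self-duality of the cone of squares, unitality from trace-preservation of $\Phi$, and trace-preservation from unitality of $\Phi$), so $z:=\Phi^*(c)$ satisfies $0\leq z\leq e$ (since $e-z=\Phi^*(e-c)\geq 0$) and $\tr(z)=k$; in particular $\lambda(z)\in[0,1]^n$ with component sum $k$. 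Combining an EJA trace inequality of the form $\langle y,z\rangle\leq\sum_i\lambda_i^\downarrow(y)\lambda_i^\downarrow(z)$ with the elementary observation that this sum is at most $\sum_{i=1}^k\lambda_i^\downarrow(y)$ whenever $\lambda(z)\in[0,1]^n$ sums to $k$ completes the step.

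For the reverse implications, the case $\V=\Rn$ is immediate: the Jordan product on $\Rn$ is componentwise multiplication and its automorphisms are exactly the coordinate permutations, so $(c)\Rightarrow(a)$ reduces to the classical Birkhoff theorem applied to the doubly stochastic matrix $D$ with $\lambda(x)=D\lambda(y)$. In the simple case I would rely on the structural fact that $\Aut(\V)$ acts transitively on ordered Jordan frames. Given spectral decompositions $x=\sum x_ie_i$ and $y=\sum y_if_i$ with decreasing eigenvalues, an automorphism sending $\{f_i\}$ to $\{e_i\}$ reduces to the common-frame case; then a classical Birkhoff decomposition $\lambda(x)=\sum_\sigma t_\sigma P_\sigma\lambda(y)$, combined with frame-permuting automorphisms $\psi_\sigma$ satisfying $\psi_\sigma(e_i)=e_{\sigma(i)}$ (again by frame-transitivity within the fixed frame), assembles $x=\sum_\sigma t_\sigma\psi_\sigma(y)$, which is $(a)$. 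The main obstacle I anticipate is precisely this last step: frame-transitivity of $\Aut(\V)$ on ordered Jordan frames of a simple algebra is a nontrivial structural theorem (Faraut--Koranyi), and care is required to realize arbitrary permutations of a fixed Jordan frame as genuine algebra automorphisms rather than by ad hoc rearrangements of spectral sums.
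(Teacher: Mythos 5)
The paper itself contains no proof of this theorem---it is imported verbatim from \cite{gowda-positive-map} (Theorem 6)---so there is nothing internal to compare against, and your argument is correct, supplying essentially the standard proof behind that citation. Specifically, your steps are all sound: automorphisms are unital, positive, and eigenvalue- (hence trace-) preserving, so $(a)\Rightarrow(b)$; the Ky Fan maximum principle over rank-$k$ idempotents, the fact that $\Phi^*$ is again doubly stochastic, and the trace inequality $\langle y,z\rangle\le \lambda(y)^\T\lambda(z)$ together with the elementary linear-programming bound give $(b)\Rightarrow(c)$; and Hardy--Littlewood--P\'olya/Birkhoff combined with transitivity of $\Aut(\V)$ on Jordan frames of a simple algebra (Faraut--Kor\'anyi, Theorem IV.2.5), which you correctly flag as the key structural input, yields the reverse implications in the $\Rn$ and simple cases.
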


We also mention a result of Jeong and Gowda (\cite{jeong-gowda}, Lemma 2)  that says that {\it $\Phi:\V\rightarrow \V$ is doubly stochastic if and only if $\Phi(x)\prec x$ for all $x\in \V$.}

\gap

A function $f:\Rn\rightarrow \R$ is {\it permutation invariant} if $f(\sigma(x))=f(x)$ for all permutation matrices $\sigma$ on $\Rn$. A function $F:\V\rightarrow \R$ is said to be a {\it spectral 
function} \cite{baes, jeong-gowda} if there is a permutation invariant function $f:\Rn\rightarrow \R$ such that $F(x)=f(\lambda(x))$ for all $x$.
In this situation, various properties of $f$ such as continuity, convexity, positive homogeneity, etc., carry over to $F$ \cite{baes, jeong-gowda}. In particular, if $f$ is a (permutation invariant) norm on $\Rn$, then $F$ is a norm on $\V$, which we call a {\it spectral norm}; the spectral $p$-norm, $1\leq p\leq \infty$, is an example. 
Spectral functions have the  automorphism invariance property, namely,  $F\circ T=F$ for any automorphism $T$. 

We end this section by recalling two  convexity results. 
\begin{itemize}
{\it \item [(a)] If $\Omega$ is a convex set in $\Rn$ and $f:\Omega\rightarrow {\cal R}$ is convex, then  \cite{marshall-olkin} 
\begin{equation}\label{schur convexity}
\big [p,q\in \Omega,\,p\prec q\big ] \Rightarrow f(p)\leq f(q).
\end{equation}
\item [(b)] If $f:\Rn\rightarrow \R$ is permutation invariant and convex, then $F:=f\circ \lambda$ is convex on $\V$ \cite{baes} and 
\begin{equation}\label{spectral convexity}
\big [x,y\in \V,\,x\prec y\big ] \Rightarrow F(x)\leq F(y).
\end{equation}
}
\end{itemize}

\section{Main results and some consequences}

We first describe a  necessary and sufficient condition for $T(x)\prec x$ to hold when $T$ is  a Schur-product induced transformation. This is motivated by a result of Bapat and Sunder (\cite{bapat-sunder}, Corollary 2) proved in the setting of 
$\Hn$.
 
\begin{theorem}\label{first main theorem}
{\it 
We fix a Jordan frame $\{e_1,e_2,\ldots, e_n\}$  relative  to which Schur products are considered. For a given $C\in \Sn$, let  $T(x):=C\bullet x$ on $\V$. 
Consider the  following statements:
\begin{itemize}
\item [(a)] $C$ is a correlation matrix.
\item [(b)] $T$ is doubly stochastic.
\item [(c)] $T(x)\prec x$ for all $x$.
\end{itemize}
Then, $(a)\Rightarrow (b)\Leftrightarrow (c)$. The implication $(c)\Rightarrow (a)$ holds when $\V$ is simple.
}
\end{theorem}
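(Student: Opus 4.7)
The plan is to establish the chain $(a)\Rightarrow (b)\Leftrightarrow (c)$ first, and then handle the simple-case reverse implication $(c)\Rightarrow (a)$ separately. The equivalence $(b)\Leftrightarrow (c)$ is a direct invocation of the Jeong--Gowda characterization stated immediately after Theorem \ref{majorization theorem} ($\Phi$ is doubly stochastic if and only if $\Phi(x)\prec x$ for all $x\in\V$), so no fresh argument is required there.

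For $(a)\Rightarrow (b)$, I would verify the three defining conditions of a doubly stochastic transformation in turn. Positivity of $T$ follows at once from the $(a)\Rightarrow(b)$ direction of Theorem \ref{positivity} applied to the positive semidefinite matrix $C$. The unital condition is read off directly from the Peirce decomposition of the unit: since $e=e_1+\cdots+e_n$, we have $e_{ii}=e_i$ and $e_{ij}=0$ for $i<j$, hence $T(e)=\sum_{i}c_{ii}e_i=\sum_i e_i=e$ using $c_{ii}=1$. For trace preservation, the key observation is that the mutual orthogonality of Peirce subspaces forces $\V_{ij}\perp e_k$ for every $k$ whenever $i<j$, and therefore $\V_{ij}\perp e$; consequently, writing $x_{ii}=\alpha_i e_i$, we obtain
\[
\tr(x)=\langle x,e\rangle=\sum_i\alpha_i\quad\text{and}\quad\tr(T(x))=\langle C\bullet x,e\rangle=\sum_i c_{ii}\alpha_i,
\]
and these agree because $c_{ii}=1$.

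For the converse $(c)\Rightarrow(a)$ under the additional assumption that $\V$ is simple, I would split the verification into the two properties defining a correlation matrix. To pin down the diagonal, apply $(c)$ at $x=e$: the Peirce calculation above gives $T(e)=\sum_i c_{ii}e_i$, whose eigenvalues are exactly $c_{11},\dots,c_{nn}$, and the hypothesis $\lambda(T(e))\prec\lambda(e)=(1,\dots,1)$ forces the largest sorted entry of $(c_{11},\dots,c_{nn})$ to be at most $1$ while the total sum is required to equal $n$; this is possible only if every $c_{ii}=1$. To obtain the positive semidefinite property of $C$, chain the already-proved $(c)\Rightarrow(b)$ to conclude that $T$ is positive, and then invoke the converse direction of Theorem \ref{positivity}, which is precisely where simplicity of $\V$ enters, to conclude that $C$ is positive semidefinite.

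The only genuinely computational piece of the argument is the trace-preservation step; every other ingredient is a direct citation of Theorem \ref{positivity}, Theorem \ref{majorization theorem}, or the Jeong--Gowda lemma. I therefore expect no real obstacle beyond setting up the Peirce orthogonality cleanly enough to make the trace identity transparent and making sure the simplicity hypothesis is invoked in exactly the right place.
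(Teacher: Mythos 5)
Your proposal is correct and follows essentially the same route as the paper's proof, using the same three ingredients (Theorem \ref{positivity}, Theorem \ref{majorization theorem}, and the Jeong--Gowda lemma) in the same roles. The only cosmetic differences lie in $(c)\Rightarrow(a)$: you obtain positivity of $T$ by routing through the already-established equivalence $(c)\Leftrightarrow(b)$ instead of the paper's direct Hardy--Littlewood--P\'olya argument, and you pin down the diagonal entries by a single evaluation at $x=e$ rather than at each $e_i$; both variants are valid.
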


\begin{proof}
$(a)\Rightarrow (b)$: This has been observed in \cite{gowda-positive-map}. For the sake of completeness, we outline its proof. When $(a)$ holds, $C=[c_{ij}]$ is positive semidefinite with $c_{ii}=1$ for all $i$. 
Hence, by Theorem \ref{positivity}, $x\geq 0\Rightarrow T(x)\geq 0$, so $T$ is positive. As the expression $e=\sum_{i=1}^{n} e_i$ can be viewed as the Peirce decomposition of (the unit element) $e$ with respect to the given Jordan frame, $C\bullet e=\sum_{i=1}^{n} c_{ii}e_i=\sum_{i=1}^{n} e_i=e$; hence $T$ is unital.  
Finally, for any $x\in \V$, writing its Peirce decomposition $x=\sum_{i\leq j} x_{ij}$ with respect to $\{e_1,e_2,\ldots, e_n\}$, we have  
$\tr(T(x))=\tr(C\bullet x)=\tr(\sum_{i=1}^{n} c_{ii}x_{ii})=\sum_{i=1}^{n} \tr(x_{ii})=\tr(x)$. Thus, $T$ is trace preserving and so, $T$ is a doubly stochastic.
\\
$(b)\Leftrightarrow (c)$: While the implication $(b)\Rightarrow (c)$ comes from Theorem \ref{majorization theorem}, the equivalence has been observed in \cite{jeong-gowda}, Lemma 2.\\
$(c)\Rightarrow (a)$: Assume that $\V$ is simple and $T(x)\prec x$ for all $x$. This means, by definition, 
$\lambda(T(x))\prec \lambda(x)$ for all $x\in \V$. First, let $x\geq 0$ so that $\lambda(x)$ is a nonnegative vector in $\Rn$. Then, by the Theorem of Hardy, Littlewood, and P\'{o}lya (mentioned in the Introduction), $\lambda(T(x))$ is also nonnegative; hence $C\bullet x=T(x)\geq 0$.
As $\V$ is simple, we invoke Theorem \ref{positivity} to see that $C$ is positive semidefinite.  We now show that 
$c_{11}$, the $(1,1)$ entry of $C$, is one.  By $(c)$, $T(e_1)\prec e_1$, that is, 
$\lambda(T(e_1))\prec \lambda(e_1)$; hence $\tr(C\bullet e_1))=\tr(e_1)$. This simplifies to 
$c_{11}=1$. A similar argument shows that $c_{ii}=1$ for every $i$. Thus, $C$ is a correlation matrix. 
\end{proof}

\gap

\noindent{\bf Remark.} In the setting of the (simple) algebra $\Sn$ with  the canonical Jordan frame,
the implication $(a)\Rightarrow (c)$ is covered by the  result of Bapat and Sunder mentioned earlier. While this result allows Schur products of/by complex Hermitian matrices, in Theorem \ref{first main theorem} we consider products by real symmetric matrices only.  
\\ 
Now, for $n>1$, consider the (non-simple) algebra $\Rn$ and let $C=[c_{ij}]$ be any matrix in $\Sn$ with 
$c_{ii}=1$ for all $i$. Let $\{e_1,e_2,\ldots, e_n\}$ be the standard coordinate system in $\Rn$. Then, up to permutations, this is
is the only Jordan frame in $\Rn$. For any $x\in \Rn$,  the coordinate representation $x=\sum_{i=1}^{n} x_ie_i$ 
can be regarded as a Peirce decomposition of $x$; hence $T(x):=C\bullet x=\sum_{i=1}^{n} c_{ii}x_ie_i=\sum_{i=1}^{n} x_ie_i=x$. Thus, $T(x)\prec x$ holds for all $x$, but $C$ is not necessarily positive semidefinite. So, the implication $(c)\Rightarrow (a)$ need not hold in non-simple algebras.
 
\gap

There are a number   of ways of creating correlation matrices. Some are given below.
\begin{itemize}
\item [$\bullet$] If $A=[a_{ij}]\in \Sn$ is positive semidefinite with positive diagonal entries, then $C=[c_{ij}]$ with
$c_{ij}:=\frac{a_{ij}}{\sqrt{a_{ii}}\sqrt{a_{jj}}}$ is a correlation matrix. 
\item [$\bullet$] Suppose $0<a_1\leq a_2\leq \cdots\leq a_n$ in $\R$. Then, the matrix $A=[a_{ij}]\in \Sn$, where
$a_{ij}:=\frac{a_i}{a_j}$ for $1\leq i\leq j\leq n$, is a correlation matrix, see \cite{bhatia2}, 5.2.22.
\item [$\bullet$] Suppose $a_1,a_2,\ldots, a_n$ are real numbers. Then the matrix $A=[a_{ij}]\in \Sn$, where
$a_{ij}:=\frac{1}{1+|a_i-a_j|}$, is a correlation matrix, see \cite{bhatia2}, 5.2.23.
 \item [$\bullet$] 
Let $\phi$ be a complex-valued function on $\R$ such that for each $N\in \mathbb{N}$ and every choice of real numbers $a_1,a_2,\ldots, a_N$, the $N\times N$ complex matrix $\big [\phi(a_i-a_j)\big ]$ is Hermitian and positive semidefinite. (In \cite{bhatia2}, such functions are called `positive definite'.) 
Clearly, if  such a $\phi$ is real-valued and $\phi(0)=1$, then the real matrix $\big [\phi(a_i-a_j)\big ]$ is symmetric and positive semidefinite, hence a correlation matrix. 
In \cite{bhatia2}, Chapter 5, we find numerous examples of such functions  
 (some of which  are also `infinitely divisible':
$\phi$ is nonnegative and for each $r>0$, the function $\phi^r$ is  `positive definite'). Few examples  are:
$\cos t$, $\frac{\sin t}{t}$, $\frac{t}{\sinh t}$, and $e^{-|t|}$ (see \cite{bhatia2}, 5.2.2, 5.2.6, 5.2.9, and 5.2.17).
\end{itemize}

\gap

We now provide a simple consequence of the above theorem that gives inequalities of the form $A\bullet x\prec B\bullet x$ for all $x$. 
In what follows, given  a matrix $A=[a_{ij}]\in \Sn$ and $k\in \mathbb{N}$, we write
$$A^{(k)}:=[a_{ij}^k]$$
for the (usual) Schur product of $A$ with itself $k$ times. From the Schur product theorem (see Section 2), it follows that if 
$A$ is positive semidefinite, then so is $A^{(k)}$.
When $a_{ij}\neq 0$ for all $i,j$, we define
$$A^{(-k)}:=[a_{ij}^{-k}]\quad (k\in \mathbb{N}).$$

\begin{theorem}\label{second main theorem}
{\it We fix  a Jordan frame in $\V$ relative to which Schur products are considered. Suppose  $A=[a_{ij}], B=[b_{ij}] \in \Sn$
with $b_{ij}\neq 0$ for all $i,j$, and the matrix $\Big [\, \frac{a_{ij}}{b_{ij}}\,\Big ]$ is a correlation 
 matrix.  Let $k\in \mathbb{N}$. 
Then, $A^{(k)}\bullet x\prec B^{(k)}\bullet x\,\,\mbox{for all}\,\,x\in \V$. 
Moreover, if $a_{ij}\neq 0$ for all $i,j$, then,
 $B^{(-k)}\bullet x\prec A^{(-k)}\bullet x\,\,\mbox{for all}\,\,x\in \V$.
}
\end{theorem}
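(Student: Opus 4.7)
The plan is to reduce both inequalities to Theorem \ref{first main theorem} by exhibiting a single correlation matrix that bridges the two Schur-product-induced transformations in each case.

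First, I would record the compositional identity for Schur products: for any $P, Q \in \Sn$ and the Peirce decomposition $x = \sum_{i \le j} x_{ij}$ with respect to the fixed Jordan frame,
\[
Q \bullet (P \bullet x) \;=\; \sum_{i \le j} q_{ij}\, p_{ij}\, x_{ij},
\]
so that composing two Schur-product-induced transformations is itself a Schur-product-induced transformation, whose matrix is the ordinary entrywise product of $Q$ and $P$. This is immediate from the fact that $p_{ij}\, x_{ij} \in \V_{ij}$.

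Second, I would produce the correct correlation matrix. By hypothesis, $C := \big[a_{ij}/b_{ij}\big]$ is a correlation matrix, and as noted just before the statement, iterating the Schur product theorem shows that $C^{(k)} = \big[(a_{ij}/b_{ij})^k\big]$ is positive semidefinite; since its diagonal entries remain $1$, $C^{(k)}$ is itself a correlation matrix. By Theorem \ref{first main theorem}, the map $y \mapsto C^{(k)} \bullet y$ is doubly stochastic, and in particular $C^{(k)} \bullet y \prec y$ for every $y \in \V$.

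Third, I would specialize this bridge majorization twice. For the first inequality, setting $y := B^{(k)} \bullet x$, the compositional identity combined with the entrywise calculation $(a_{ij}/b_{ij})^k \cdot b_{ij}^k = a_{ij}^k$ yields $C^{(k)} \bullet y = A^{(k)} \bullet x$, and $A^{(k)} \bullet x \prec B^{(k)} \bullet x$ follows. For the second inequality, assuming $a_{ij} \neq 0$ so that $A^{(-k)}$ is defined, setting $y := A^{(-k)} \bullet x$ and using $(a_{ij}/b_{ij})^k \cdot a_{ij}^{-k} = b_{ij}^{-k}$ gives $C^{(k)} \bullet y = B^{(-k)} \bullet x$, whence $B^{(-k)} \bullet x \prec A^{(-k)} \bullet x$. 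I do not anticipate any genuine obstacle beyond the observation that $C^{(k)}$ (and not, say, $C$ itself or $[b_{ij}/a_{ij}]$) is the correct correlation matrix to feed into Theorem \ref{first main theorem}; everything else is a short bookkeeping computation.
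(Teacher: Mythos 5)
Your proposal is correct and follows essentially the same route as the paper: observe that $C^{(k)}$ is again a correlation matrix via the Schur product theorem, invoke Theorem \ref{first main theorem} to get $C^{(k)}\bullet v\prec v$, and then specialize $v$ to $B^{(k)}\bullet x$ (respectively $A^{(-k)}\bullet x$), the only cosmetic difference being that you make the composition identity $Q\bullet(P\bullet x)=\sum_{i\leq j}q_{ij}p_{ij}x_{ij}$ explicit where the paper simply says the expression ``simplifies,'' and that the paper derives the second inequality by relabeling the matrices and reusing the first part rather than by a second direct substitution.
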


\noindent{\it Note:} In certain settings, via continuity arguments, one can show that the first part of the above conclusion  is valid even when some entries of $B$ are zero.

\gap

\begin{proof} 
Fix $k\in \mathbb{N}$ and $x\in \V$. By assumption, $C:=\Big [\, \frac{a_{ij}}{b_{ij}}\,\Big ]$ is a correlation matrix. In view of Schur product theorem, 
$C^{(k)}=\Big [\,\frac{a_{ij}^{k}}{b_{ij}^{k}}\,\Big ]$ is also a correlation matrix. Then,  
by Theorem \ref{first main theorem}, 
$C^{(k)}\bullet v\prec v$ for all $v\in \V$. For the given $x$,
 we let $v=B^{(k)}\bullet x$ to get 
$$C^{(k)}\bullet (B^{(k)}\bullet x)\prec B^{(k)}\bullet x.$$
This simplifies to $A^{(k)}\bullet x\prec B^{(k)}\bullet x.$ 
\\
Now suppose that $a_{ij}\neq 0$ for all $i,j$. 
Then, the matrix $\Big [\,\frac{b_{ij}^{-k}}{a_{ij}^{-k}}\,\Big ]$ (which equals 
$C^{(k)}$) is a correlation matrix. Hence, by what has been proved earlier, 
$B^{(-k)}\bullet x\prec A^{(-k)}\bullet x.$ 
\end{proof}

\gap

\noindent{\bf Remarks.} Easy examples can be constructed to show that the implication
$T(x)\prec S(x)\Rightarrow S^{-1}(x)\prec T^{-1}(x)$ need not hold even when $T$ and $S$ are invertible.

\gap

\begin{example}\label{example: pa majorized by lasquare}
Consider an element $a\in \V$ with its spectral decomposition $a=a_1e_1+a_2e_2+\cdots+a_ne_n$.
Define matrices $A=[a_{ij}]$ and $B=[b_{ij}]$ with $a_{ij}=a_ia_j$ and
$b_{ij}=\frac{a_i^2+a_j^2}{2}$ for all $i,j$. Then, for any $x$, with respect to the Jordan frame $\{e_1,e_2,\ldots, e_n\}$, 
$P_{a}(x)=A\bullet x$ and $L_{a^2}(x)=B\bullet x$, see (\ref{schur form of la and pa}). 
We claim that for any $x\in \V$ and $k\in \mathbb{N}$,
$A^{(k)}\bullet x\prec B^{(k)}\bullet x$, thus proving the inequality
\begin{equation}\label{pak majorized by lasquarek}
(P_a)^k(x)\prec (L_{a^2})^k(x)\,\,\mbox{for all}\,\,x\in \V,\,k\in \mathbb{N}.
\end{equation}
To see this, we fix $x$ and $k\in \mathbb{N}$, and  assume without loss of generality that $a$ is invertible. 
(The general case follows from a continuity argument involving $\lambda$ and compactness of the set of all $n\times n$ doubly stochastic matrices). Under this invertibility assumption, the matrix
$C:=\big [\frac{a_{ij}}{b_{ij}}\big ]=\big [\frac{2a_ia_j}{a_i^2+a_j^2}\big ]$ is positive semidefinite as it
is the Schur product of the  positive semidefinite matrix $[2a_ia_j]$ and the Cauchy matrix $[\frac{1}{a_i^2+a_j^2}]$ (see \cite{bhatia2}, Exercise 1.1.2). Additionally, the diagonal entries of $C$ are all $1$. Hence we can apply Theorem \ref{second main theorem} to get $A^{(k)}\bullet x\prec B^{(k)}\bullet x$.
\\
We consider two special cases of (\ref{pak majorized by lasquarek}). 
First, when $k=1$, we get the pointwise inequality $P_a(x)\prec L_{a^2}(x)$. 
This implies, in the setting of $\V=\Sn$,
$$AXA\prec \frac{A^2X+XA^2}{2}\,\,\mbox{for all}\,\,A,X\in \Sn.$$ 
For the second, let $k=1$ and $a\geq 0$. Then, by replacing $a$ by $\sqrt{a}$, we get
$$P_{\sqrt{a}}(x)\prec L_a(x)\,\,\mbox{for all}\,\,x\in \V,\,\,a\geq 0.$$
\end{example}

\gap

\begin{corollary}
{\it Consider a Jordan frame $\{e_1,e_2,\ldots, e_n\}$ in $\V$ and let 
$A=[a_{ij}]\in \Sn$ be positive semidefinite. Define $a:=a_{11}e_1+a_{22}e_2+\cdots+a_{nn}e_n$ and $B:=[b_{ij}]$ with $b_{ij}=\sqrt{a_{ii}}\sqrt{a_{jj}}$. 
Then,
\begin{equation} \label{pa inequality}
A\bullet x\prec B\bullet x=P_{\sqrt{a}}(x)\,\,\mbox{for all}\,\,x\in \V.
\end{equation}
Moreover, 
\begin{equation} \label{schur-hadamard inequality}
(a_{11}\,a_{22}\cdots\,a_{nn})\,\det(x)\leq \det(A\bullet x)\,\,\mbox{for all}\,\,x\geq 0\,\,\mbox{in}\, \V.
\end{equation}
}
\end{corollary}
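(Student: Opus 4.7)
My plan is to deduce both parts from Theorem \ref{second main theorem} together with the standard Schur-concavity of the geometric mean.

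First I would verify the equality in (\ref{pa inequality}). Since $A$ is positive semidefinite, its diagonal entries are nonnegative, so $a=\sum_i a_{ii}e_i\geq 0$ and $\sqrt{a}=\sum_i\sqrt{a_{ii}}\,e_i$ is well defined; applying (\ref{schur form of la and pa}) to $\sqrt{a}$ with respect to the given Jordan frame immediately yields $P_{\sqrt{a}}(x)=\sum_{i\leq j}\sqrt{a_{ii}}\sqrt{a_{jj}}\,x_{ij}=B\bullet x$.

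Next, I would establish the majorization $A\bullet x\prec B\bullet x$. Assume first that $a_{ii}>0$ for every $i$. Then the entrywise quotient $\bigl[\tfrac{a_{ij}}{b_{ij}}\bigr]=\bigl[\tfrac{a_{ij}}{\sqrt{a_{ii}}\sqrt{a_{jj}}}\bigr]$ is precisely the correlation matrix canonically associated to $A$ (the first construction in the bulleted list following Theorem \ref{first main theorem}), so Theorem \ref{second main theorem} with $k=1$ gives the claim at once. For the general PSD case, I would perturb to $A_\varepsilon:=A+\varepsilon I$, which remains PSD with strictly positive diagonal; the previous step yields $A_\varepsilon\bullet x\prec B_\varepsilon\bullet x$ for every $\varepsilon>0$, and since both sides depend continuously on $\varepsilon$ and $\lambda$ is continuous, the majorization passes to the limit $\varepsilon\to 0^+$.

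For the Schur--Hadamard inequality, fix $x\geq 0$. By Theorem \ref{positivity}, $A\bullet x\geq 0$, and by the standard positivity property of the quadratic representation, $P_{\sqrt{a}}(x)\geq 0$. Thus the eigenvalue vectors of both elements lie in $\Rnp$, and the Schur-concavity of $p\mapsto\prod_i p_i$ on the nonnegative orthant (which follows from concavity of $\log$ together with a simple perturbation argument at the boundary) applied to the majorization just established yields $\det(A\bullet x)\geq\det(P_{\sqrt{a}}(x))$. Combining this with the classical identity $\det(P_c(y))=\det(c)^2\det(y)$ and the evident $\det(\sqrt{a})^2=\det(a)=\prod_i a_{ii}$ gives (\ref{schur-hadamard inequality}). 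The main technical nuisance is the continuity step when some $a_{ii}$ vanishes; there one must verify that the partial-sum inequalities defining majorization survive the limit $\varepsilon\to 0^+$, which is immediate from the Lipschitz continuity of $\lambda$ recalled in Section 2. Beyond this, every step is a direct application of results already assembled in the paper.
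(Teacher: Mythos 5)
Your proposal is correct and follows essentially the same route as the paper: the identity $B\bullet x=P_{\sqrt{a}}(x)$ via (\ref{schur form of la and pa}), the majorization via Theorem \ref{second main theorem} applied to the correlation matrix $\bigl[a_{ij}/(\sqrt{a_{ii}}\sqrt{a_{jj}})\bigr]$ followed by a perturbation/limit argument, and the determinant inequality via Schur-concavity of the product (equivalently convexity of $-\sum_i\ln p_i$) combined with $\det(P_{\sqrt{a}}(x))=\det(a)\det(x)$. The only (harmless) deviations are that you pass to the limit using closedness of the majorization relation rather than the paper's compactness-of-doubly-stochastic-matrices argument, and you handle the degenerate determinant case by extending Schur-concavity to the closed nonnegative orthant instead of perturbing $A$ and $x$ and invoking continuity of $\det$.
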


\begin{proof}
First suppose that $A$ is positive definite, in which case all entries of $B$ are nonzero. Then,  it is easy to see that the matrix
$\Big [\, \frac{a_{ij}}{b_{ij}}\,\Big ]$ is a correlation matrix. By the above theorem, $A\bullet x\prec B\bullet x$ for all $x\in \V$. When $A$ is (just) positive semidefinite, we consider $A_k:=A+\frac{1}{k}I$, where $k\in \mathbb{N}$ and $I$ is the identity matrix. Correspondingly, we define $B_k$. For any fixed $x\in \V$, we have
$A_k\bullet x\prec B_k\bullet x$, that is, $\lambda(A_k\bullet x)\prec \lambda(B_k\bullet x)$. Since $\lambda$ is continuous, by using the compactness of the set of all $n\times n$ doubly stochastic matrices, we can let $k\rightarrow \infty$ (via a subsequence) to get
$A\bullet x\prec B\bullet x.$ Since $B\bullet x=\sum_{i\leq j}\sqrt{a_{ii}}\sqrt{a_{jj}}\,x_{ij}=P_{\sqrt{a}}(x)$, we get (\ref{pa inequality}). \\
To see (\ref{schur-hadamard inequality}), we first assume that $A$ is positive definite (so that diagonal entries of $A$ are  positive) and  $x> 0$. We use (\ref{schur convexity}) with (the convex function) $f(p)=-\sum_{i=1}^{n}\ln\, p_i$ (defined on the set of all positive vectors $p$ in $\Rn$) to get
$\det(P_{\sqrt{a}}(x))\leq \det(A\bullet x).$ As   $\det(P_{\sqrt{a}}(x))=\det(a)\det (x)$ (see \cite{faraut-koranyi}, Proposition III.4.2), we get $(a_{11}\,a_{22}\,\cdots\,a_{nn})\,\det(x)\leq \det(A\bullet x).$ By continuity, a similar result holds when $A$ is positive semidefinite and $x\geq 0$.
\end{proof}

Here are some  consequences. 
\begin{itemize}
\item 
Consider $a\in \V$ with its spectral decomposition  $a:=a_{1}e_1+a_{2}e_2+\cdots+a_{n}e_n$ and let $|a|:=|a_{1}|e_1+|a_{2}|e_2+\cdots+|a_{n}|e_n$.  Then,
$$P_a(x)\prec P_{|a|}(x)\quad (x\in \V).$$
To see this, let $A:=[a_{ij}]$ and $B=[b_{ij}]$, where $a_{ij}:=a_ia_j$ and $b_{ij}=\sqrt{a_{ii}}\sqrt{a_{jj}}=|a_i|\,|a_j|$  for all $i,j$. By the above corollary and (\ref{schur form of la and pa}),
$$P_a(x)=A\bullet x\prec B\bullet x=P_{|a|}(x),$$
where the Schur products are defined relative to the Jordan frame $\{e_1,e_2,\ldots, e_n\}$.
\item [$\bullet$] 
{\it If $A\in \Sn$ is positive semidefinite with the diagonal entries positive, then
$x>0\Rightarrow A\bullet x>0 .$
}
This can be seen as follows. 
When the diagonal entries of $A$ are positive and $x>0$,  $a:=a_{11}e_1+a_{22}e_2+\cdots+a_{nn}e_n$ is invertible; hence 
$\sqrt{a}$ is invertible and so  $P_{\sqrt{a}}(x)>0$.
Now, $A\bullet x\prec P_{\sqrt{a}}(x)$ implies that $\lambda(A\bullet x)\prec \lambda(P_{\sqrt{a}}(x))$ in $\Rn$. As the entries of $\lambda(P_{\sqrt{a}}(x))$ are positive, those of $\lambda(A\bullet x)$ must also be positive, implying $A\bullet x>0$.
\item [$\bullet$]
For any $x\in \V$ and $k\in \{1,2,\ldots, n\}$, let $\lambda_{max}(x)$, $\lambda_{min}(x)$, and $S_k(x)$ denote, respectively,
 the largest eigenvalue, smallest eigenvalue, and the sum of the $k$ largest eigenvalues of $x$. It is known (see \cite{baes}, Lemma 20) that as functions of $x$, these are convex and spectral. So, if $A\bullet x\prec B\bullet x$, then, thanks to (\ref{spectral convexity}), 
$$\lambda_{max}(A\bullet x)\leq \lambda_{max}(B\bullet x)\quad\mbox{and}\quad S_k(A\bullet x)\leq S_k(B\bullet x)\quad (x\in \V).$$ 
In this setting, as $\tr(A\bullet x)=\tr(B\bullet x)$, we also have $\lambda_{min}(B\bullet x)\leq \lambda_{min}(A\bullet x)$.
\item [$\bullet$] Suppose $||\cdot||_{sp}$ is a spectral norm on $\V$.
An application of (\ref{spectral convexity}) shows the implication 
$$ A\bullet x\prec B\bullet x\Rightarrow ||A\bullet x||_{sp}\leq ||B\bullet x||_{sp}.$$
To illustrate further,
suppose $A=[a_{ij}]\in \Sn$ is positive semidefinite. By considering the spectral $p$-norm for any $1\leq p\leq \infty$,  from (\ref{pa inequality}) we get 
$||A\bullet x||_p\leq ||P_{\sqrt{a}}(x)||_p$, where  
$a:=a_{11}e_1+a_{22}e_2+\cdots+a_{nn}e_n$. Since 
$||P_b(x)||_p\leq ||b||^{2}_\infty||x||_p$ for any $b\in \V$ (see \cite{gowda-holder}, Theorem 5.3), we have  $||P_{\sqrt{a}}(x)||_p\leq (\max_{i}|a_{ii}|) ||x||_p$ and so
$$||A\bullet x||_p\leq (\max_{i}|a_{ii}|) ||x||_p\quad (x\in \V).$$ 
This is similar to the result given in \cite{bhatia2}, Exercise 2.7.12(iii) for unitary invariant norms on ${\cal M}^n$.
\end{itemize}
\gap
 
We now provide two  examples illustrating the above results. 

\begin{example}
We fix a Jordan frame $\{e_1,e_2,\ldots, e_n\}$ in $\V$ and consider the Peirce decomposition of an $x\in \V$: 
$x=\sum_{i=1}^{n}x_{ii}+\sum_{i<j}x_{ij}.$ Noting  $x_{ii}\in \V_{ii}=\R\,e_i$ we let  
 $$\diag(x):=\sum_{i=1}^{n}x_{ii}$$ denote the ``diagonal" of $x$.
We specialize Theorem \ref{second main theorem} by letting  
 $A$ be the $n\times n$ identity matrix and $B=\one_{n\times n}$. Then, $A\bullet x\prec B\bullet x$ for all $x\in \V$ becomes 
$$\diag(x)\prec x\,\,\mbox{for all}\,\,x\in \V.$$
This is {\it Schur's inequality} in the setting of Euclidean Jordan algebras, see \cite{sznajder-gowda-moldovan}.
This leads to the {\it Hadamard inequality}
$$\det(x)\leq \det(\diag(x)) 
\,\,(\mbox{when}\,\,x\geq 0).$$ 
Combining this with (\ref{schur-hadamard inequality}), one gets the {\it Oppenheimer inequality}: for $x\geq 0$ and any $A\in \Sn$ positive semidefinite, 
$$(a_{11}\,a_{22}\cdots\,a_{nn})\,\det(x)\leq \det(A\bullet x)\leq (a_{11}\,a_{22}\cdots\,a_{nn})\,\det(\diag(x)).$$
This inequality has been observed in \cite{sznajder-gowda-moldovan}, Theorem 5, with a different proof.
\end{example}

\gap

\begin{example}
Consider a nonzero idempotent $c$ in $\V$ and the corresponding Peirce decomposition of any $x$ in the form (\ref{short peirce decomposition}): $x=u+v+w,$
where $u\in \V(c,1)$ and $w\in \V(c,0)$. As in Section 2, we could write $x$ in the form (\ref{full peirce decomposition}) for some Jordan frame. Assuming this, we let $A$ be the $n\times n$ matrix defined by 
$$A=\left [ \begin{array}{cc}
\one_{k\times k} & 0\\
0 & \one_{(n-k)\times (n-k)}
\end{array}\right ].$$
Applying the above corollary, we see that $B=\one_{n\times n}$ and $u+w=A\bullet x\prec B\bullet x=x.$  Thus, 
$$u+w\prec x.$$
This has been observed in \cite{gowda-positive-map} with a different proof  
and was crucially used in proving the H\"{o}lder type inequality \cite{gowda-holder}
$$||x\circ y||_1\leq ||x||_p\,||y||_q,$$
where $1\leq p,q\leq \infty$ with $\frac{1}{p}+\frac{1}{q}=1$.
When $x\geq 0$, a consequence of $u+w\prec x$ is the inequality $\det(x)\leq \det(u+v)=\det(u)\,\det(v)$ , which is known as the {\it generalized Fischer inequality} (see \cite{gowda-tao}, Prop. 2).
\end{example}

\section{Majorization inequalities induced by scalar means}
In Example \ref{example: pa majorized by lasquare}, we proved the inequality $P_{\sqrt{a}}(x)\prec L_{a}(x)$ for  $a\geq 0$ by applying Theorem \ref{second main theorem} to matrices $A=[a_{ij}]$ and $B=[b_{ij}]$ with $a_{ij}=\sqrt{a_ia_j}$ and $b_{ij}=\frac{a_i+a_j}{2}.$
It is interesting to observe that $\sqrt{a_ia_j}$ and $\frac{a_i+a_j}{2}$ are the geometric and arithmetic means of nonnegative numbers $a_i$ and $a_j$. Motivated by this, we consider general (scalar) means and look for possible  majorization and related inequalities.  
\\

A   {\it scalar  mean} (in short, mean) is a real valued function $(t,s)\mapsto m(t,s)$ defined on the set of all pairs of positive real numbers satisfying 
a certain set of conditions (see e.g., \cite{bhatia2}, p. 101). For our discussion, we assume that a mean satisfies two basic conditions:
\begin{enumerate}
\item $m(s,t)=m(t,s)>0$ for all $t,s>0$;
\item If $0<t\leq s$, then $t\leq m(t,s)\leq s.$
\end{enumerate}
Examples include the arithmetic, geometric, and harmonic means, given respectively by
$$m_A(t,s)=\frac{t+s}{2},\quad m_G(t,s)=\sqrt{ts},\quad \mbox{and}\quad m_H(t,s)=\Big (\frac{t^{-1}+s^{-1}}{2}\Big )^{-1}.$$
Another is the logarithmic mean given by 
$$m_L(t,s)=\frac{t-s}{\ln t-\ln s}.$$
(The value on the right  is interpreted as $t$ when $s=t.$)
These are related by the inequalities
$$m_H(t,s)\leq m_G(t,s)\leq m_L(t,s)\leq m_A(t,s).$$ 
Now consider  two means $m_1(t,s)$ and $m_2(t,s)$. Given $0<a\in \V$ with its spectral decomposition $a=a_1e_1+a_2e_2+\cdots+a_ne_n$,  we consider $A=[a_{ij}]$ and $B=[b_{ij}]$, where
$$a_{ij}=m_1(a_i,a_j)\quad\mbox{and}\quad b_{ij}=m_2(a_i,a_j).$$ 
As $a_{ii}=a_i=b_{ii}$ for all $i$, we see that the matrix $C:=\big [\frac{a_{ij}}{b_{ij}}\big ]$ has all diagonal entries $1$. So,
when $C$ is positive semidefinite, we can apply our Theorem \ref{second main theorem} to get the pointwise inequality $A\bullet x\prec B\bullet x$. 
In many settings, the positive definiteness of $C$ is established via classical arguments, see e.g., \cite{bhatia2}, Chapter 5.

\begin{example} 
We fix  $0<a\in \V$ with its spectral decomposition 
$a=a_1e_1+a_2e_2+\cdots+a_ne_n$ and consider Schur products relative to the Jordan frame $\{e_1,e_2,\ldots, e_n\}$. 
Given a mean $m$, we use the corresponding capital letter to denote the matrix induced by $m$ and $\{a_1,a_2,\ldots,a_n\}$. 
(For example, in the case of the arithmetic mean $m_A$, $M_A:=[m_A(a_i,a_j)]$.)
Then, we have the following inequalities:
{\it 
\begin{equation}\label{HGA chain1}
M_H\bullet x\prec M_G\bullet x\prec M_A\bullet x\quad\mbox{for all}\,\,x\in \V,
\end{equation}
which translates to
\begin{equation}\label{HGA chain2}
(L_{a^{-1}})^{-1}(x)\prec P_{\sqrt{a}}(x)\prec L_a(x)\quad\mbox{for all}\,\,x\in \V.
\end{equation}
}
Also,
{\it
\begin{equation}\label{GLA chain1}
M_G\bullet x\prec M_L\bullet x\prec M_A\bullet x\quad\mbox{for all}\,\,x\in \V,
\end{equation}
which translates to
\begin{equation}\label{GLA chain2}
P_{\sqrt{a}}(x)\prec \int_{0}^{1}P_{a^t,a^{1-t}}(x)\,dt\prec L_a(x)\quad\mbox{for all}\,\,x\in \V,
\end{equation}
where for any two elements $u,v\in \V$, $P_{u,v}:=L_u\,L_v+L_v\,L_u-L_{u\circ v}.$
}

\gap

In (\ref{HGA chain2}), the second inequality has already been justified. We prove the first inequality.
As 
$$\frac{m_H(a_i,a_j)}{m_G(a_i,a_j)}=\frac{2\sqrt{a_i}\sqrt{a_j}}{a_i+a_j}$$
leads to a correlation matrix (see the proof in Example \ref{example: pa majorized by lasquare}),
we see (by Theorem \ref{second main theorem}) that $M_H\bullet x\prec M_G\bullet x$ for all $x$. Now, 
$$L_{a^{-1}}(x)=\sum_{i\leq j}\frac{a_i^{-1}+a_j^{-1}}{2}\,x_{ij}\quad\mbox{and}\quad P_{\sqrt{a}}(x)=\sum_{i\leq j}\sqrt{a_i}\sqrt{a_j}x_{ij}$$ and so
$$(L_{a^{-1}})^{-1}(x)=  \sum_{i\leq j}\frac{2}{a_i^{-1}+a_j^{-1}}\,x_{ij} =M_H\bullet x\prec M_G\bullet x= P_{\sqrt{a}}(x)\quad (\mbox{for all}\,\,a>0\,\,\mbox{and}\,\,x\in \V).$$
\end{example}

We proceed to prove (\ref{GLA chain1}). 
The matrix with entries
$$\frac{m_G(a_i,a_j)}{m_L(a_i,a_j)}=\frac{\sqrt{a_i}\sqrt{a_j}(\ln a_i-\ln a_j)}{a_i-a_j}$$
is a correlation matrix (see \cite{bhatia2}, p. 164 for its positive semidefiniteness). This, via Theorem \ref{second main theorem}  gives the first inequality in 
(\ref{GLA chain1}). Similarly, 
the matrix with entries
$$\frac{m_L(a_i,a_j)}{m_A(a_i,a_j)}=\frac{2(a_i-a_j)}{(\ln a_i-\ln a_j)(a_i+a_j)}$$
is a correlation matrix (see \cite{bhatia2}, p. 164 for its positive semidefiniteness). This gives the second inequality in
(\ref{GLA chain1}).
To go from (\ref{GLA chain1}) to (\ref{GLA chain2}), we need only to describe $M_L\bullet x.$
Now, using (\ref{schur form of la and pa}) and the definition of $P_{a^t,a^{1-t}}$,
we see that  
$$
P_{a^t,a^{1-t}}(x)=\sum_{i\leq j}\Big (\frac{a_j(\frac{a_i}{a_j})^t+a_i(\frac{a_j}{a_i})^t}{2}\Big )\,x_{ij},$$
which, upon integration and simplification, leads to 
$$\int_{0}^{1}P_{a^t,a^{1-t}}(x)\,dt=M_L\bullet x.$$

Similar to (\ref{GLA chain2}), based on Theorem \ref{second main theorem} and some
 additional work, one can show the following:
\begin{equation} \label{doublep inequality}
P_{\sqrt{a}}(x)\prec P_{a^t,a^{1-t}}(x)\prec  L_a(x)\quad\mbox{for all}\,\,x\in \V, a>0,\, 0\leq t\leq 1.
\end{equation}
In fact, the first inequality can be proved by considering the correlation matrix whose entries are
$$\frac{2\sqrt{a_i}\sqrt{a_j}}{a_i^t a_j^{1-t}+a_j^t a_i^{1-t}}.$$
(The matrix is positive semidefinite as it is the Schur product of  
the  positive semidefinite matrix $\big [ 2 a_i^{t-\frac{1}{2}} a_j^{t-\frac{1}{2}}\big ]$ and the Cauchy matrix 
$\big [ \frac{1}{a_i^{2t-1}+a_j^{2t-1}}\big ]$.)
The second inequality can be proved by considering the correlation matrix whose entries are
$$\frac{a_i^t a_j^{1-t}+a_j^t a_i^{1-t}}{a_i+a_j}.$$
(For its positive semidefiniteness, see \cite{bhatia2}, p. 161.)

We finally note that in the case of $\V=\Sn$, $P_{A,B}(X)=\frac{AXB+BXA}{2}$ (\cite{faraut-koranyi}, p. 32) and so, for positive definite $A$, (\ref{doublep inequality}) becomes 
$$\sqrt{A}X\sqrt{A}\prec \frac{A^tXA^{1-t}+A^{1-t}XA^t}{2} \prec \frac{AX+XA}{2}.$$

\gap

\begin{example}
Suppose $m_1$ and $m_2$ are two means with $m_1(t,s)\leq m_2(t,s)$ for all positive real numbers $t,s>0.$
For any two positive numbers $a_1$ and $a_2$, let $a_{ij}=m_1(a_i,a_j)\quad\mbox{and}\quad b_{ij}=m_2(a_i,a_j)$ where $i=1,2$. Then, in the (rank $2$)  Jordan spin algebra, with respect to any Jordan frame $\{e_1,e_2\}$, we have 
$A\bullet x\prec B\bullet x$. This is because the $2\times 2$ matrix $C:=[\frac{a_{ij}}{b_{ij}}]$ has diagonal entries $1$ and off-diagonal entries less than or equal to one.
\end{example}

\gap

\section{Exponential metric increasing property}
Here, we provide another application of Theorem \ref{second main theorem}. \\
Let $I$ be an open interval in $\R$ and ${\V}_{I}$ denote the set of all elements of $\V$ whose eigenvalues lie in $I$. Given a real valued function $g$ on $I$, we define a map  $G:{\V}_I\rightarrow \V$ as follows. For any $a\in {\V}_I$, let $a=a_1e_1+a_2e_2+\cdots+a_ne_n$ be its spectral decomposition (so that $a_i\in I$ for all $i$). Then, the so-called L\"{o}wner map is given by 
$$G(a):=g(a_1)e_1+g(a_2)e_2+\cdots+g(a_n)e_n.$$
(Our primary examples are: $g(t)=\exp(t)$ on $I=\R$ and $g(t)=\log t$ on $I=(0,\infty)$.)\\
 Now suppose that $g$ is {\it continuously differentiable} on $I$ and let
$$
g^{[1]}(t,s):=\left \{\begin{array}{ll}
\frac{g(t)-g(s)}{t-s} & \mbox{if}\,\,t\neq s,\\
g^\prime(t) & \mbox{if}\,\,t=s.
\end{array}
\right.
$$

In this  setting, Kor\'{a}nyi \cite{koranyi} has proved that $G$ is continuously differentiable at $a$ with (Fr\'{e}chet) derivative  given by 
$$G^{\prime}(a)\,x=\sum_{i\leq j} g^{[1]}(a_i,a_j)\,x_{ij},$$
where 
$x=\sum_{i\leq j}x_{ij}$ is the Peirce decomposition of $x$ with respect to the Jordan frame $\{e_1,e_2,\ldots, e_n\}$. 
By defining  the symmetric matrix 
$$A_a:=\big [ g^{[1]}(a_i,a_j)\big ],$$
we see that $G^{\prime}(a)\,x=A_a\bullet x$. It is shown in \cite{koranyi} that  when $\V$ is simple, 
$G$ is {\it operator monotone} on $\V_I$, that is, 
$$\big [u,v\in \V_I, u\geq v\big ]\Rightarrow G(u)\geq G(v)$$
if and only if $A_a$ is positive semidefinite for all $a\in \V_I$ (that is, for any $\{a_1,a_2,\ldots,a_n\}\subset I$, the matrix
$\big [ g^{[1]}(a_i,a_j)\big ]$ is positive semidefinite).

\gap

We now consider a special case. Let $I=\R$ and $g(t):=\exp(t)$ so that for any $a\in \V$ with spectral representation 
$a=a_1e_1+a_2e_2+\cdots+a_ne_n$, 
$$G(a)=\exp(a)=\sum_{i=1}^{n}\exp(a_i)e_i.$$
 By the above formula of Kor\'{a}nyi,
$G^{\prime}(a)x=A_a\bullet x$, where 
$$A_a=\Big [ \exp^{[1]}(a_i,a_j)\Big ].$$
with $\exp^{[1]}(a_i,a_j)=\frac{\exp(a_i)-\exp(a_j)}{a_i-a_j}$ when $a_i\neq a_j$ and $\exp^{[1]}(a_i,a_j)=\exp(a_i)$ when $a_i= a_j$. 
Let $$c:=\exp(-a/2)=\sum_{i=1}^{n}\exp(-a_i/2)\,e_i$$ and $c_i=\exp(-a_i/2)$ for all $i$.
Then,
$$P_{c}\big (G^{\prime}(a)x\big )= [c_ic_j]\bullet (A_a\bullet x)=\Big [ c_ic_j\exp^{[1]}(a_i,a_j)\Big ]\bullet x=
B\bullet x,$$
where $B=[ b_{ij}]$ with
$$b_{ij}:=\frac{\sinh (\frac{a_i-a_j}{2})}{(\frac{a_i-a_j}{2})}.$$
Let $A=\one_{n\times n}$. As the function $\phi(t)=\frac{t}{\sinh t}$ is positive definite (see \cite{bhatia2}, 5.2.9), the 
real symmetric matrix $\big [\frac{a_{ij}}{b_{ij}}\big ]=\big [\frac{1}{b_{ij}}\big ]$ is a correlation matrix.
We can now
apply Theorem \ref{second main theorem} to  get

\begin{equation}\label{derivative majorization}
x\prec P_{c}\big ( G^{\prime}(a)x\big )\,\,\mbox{and}\,\, ||x||_{sp}\leq ||P_{c}\big (G^{\prime}(a)x\big )||_{sp} \quad \mbox{for all}\,\,x\in \V,
\end{equation}
where the norm inequality follows from (\ref{spectral convexity}). 
It turns out  that the above norm inequality is useful in describing the so-called exponential metric increasing property.

Let
$$\Omega:=\{x\in \V:x>0\}=\exp(\V):=\{\exp(x): x\in \V\}.$$
Given $u,v\in \Omega$, consider a path (piecewise continuously differential curve)
$\gamma:[\alpha,\beta] \rightarrow \Omega$ joining $u$ and $v$, that is, $\gamma(\alpha)=u$ and $\gamma(\beta)=v$. Corresponding to a spectral norm $||\cdot||_{sp}$ on $\V$, we  define the length of this curve to be 
\begin{equation}\label{lenth of gamma}
L(\gamma):=\int_{\alpha}^{\beta}||P_{\gamma(t)^{-1/2}}\big (\gamma^{\prime}(t)\big )||_{sp}\,dt
\end{equation}
and 
$$\delta_{sp}\big (u,v\big ):=\inf\{L(\gamma):\gamma \,\,\mbox{is a path joining}\,\,u\,\,\mbox{and}\,\,v\}.$$
We claim that 
\begin{equation}\label{EMI}
\delta_{sp}\big (u,v\big )\geq ||\log u-\log v||_{sp}\quad (u,v\in \Omega),
\end{equation}
or equivalently,
$$\delta_{sp}\big (\exp(x),\exp(y)\big )\geq ||x-y||_{sp}\quad (x,y\in \V).$$
Motivated by a similar inequality in the setting of ${\cal H}^n$ \cite{bhatia2}, we  will call  this property, the 
{\it exponential metric increasing property}.

To see (\ref{EMI}), we fix $u,v\in \Omega$ and consider a path $\gamma:[\alpha,\beta] \rightarrow \Omega$
joining $u$ and $v$;
correspondingly, define  $\phi:[\alpha,\beta]\rightarrow \V$ by $\phi(t):=\log \gamma(t)$ so that $\gamma(t)=\exp(\phi(t))$.
Now, in (\ref{derivative majorization}) we put $a=\phi(t)$, $c=\gamma(t)^{-1/2}$, and $x=\phi^{\prime}(t)$.
Noting that  $\gamma^{\prime}(t)=\exp^{\prime}(\phi(t))\,\phi^{\prime}(t)$ and 
$ P_{c}\big (G^{\prime}(a)x\big )=P_{\gamma(t)^{-1/2}}\big (\gamma^{\prime}(t)\big )$, we get 
$$L(\gamma)=\int_{\alpha}^{\beta}||P_{\gamma(t)^{-1/2}}\big (\gamma^{\prime}(t)\big )||_{sp}\,dt\geq 
\int_{\alpha}^{\beta}||\phi^{\prime}(t)||_{sp}\,dt\geq ||\log u-\log v||_{sp},$$
where the last item in the above expression is the distance between $\log u$ and $\log v$ in the given spectral norm.
Taking the infimum over all $\gamma$ results in (\ref{EMI}). 

As we see below, in some instances, $\delta_{sp}(u,v)$ can be explicitly described. First, suppose
$u$ and $v$ (in $\Omega$) operator commute so that they have spectral decompositions relative to the same Jordan frame:
$$u=u_1e_1+u_2e_2+\cdots+u_ne_n\quad\mbox{and}\quad v=v_1e_1+v_2e_2+\cdots+v_ne_n.$$
Consider $\phi:[0,1]\rightarrow \V$ defined by $\phi(t)=(1-t)\log u+t\log v$ so that 
$\gamma(t):=\exp(\phi(t))=\sum_{k=1}^{n} u_k^{1-t}v_k^{t}\,e_k$ and 
$\gamma^{\prime}(t)=\sum_{k=1}^{n} (\log v_k-\log u_k)\,u_k^{1-t}v_k^{t}\,e_k$. It follows that 
$$P_{\gamma(t)^{-1/2}}\big (\gamma^{\prime}(t)\big )=\sum_{k=1}^{n} (\log v_k-\log u_k)\,e_k=\log v-\log u.$$
Hence,
$$L(\gamma)=\int_{0}^{1}||P_{\gamma(t)^{-1/2}}\big (\gamma^{\prime}(t)\big )||_{sp}\,dt=||\log v-\log u||_{sp}$$
and by (\ref{EMI}), 
$$\delta_{sp}\big (u,v\big )= ||\log u-\log v||_{sp}=||\log P_{u^{-1/2}}(v)||_{sp}.$$
Now suppose we have a spectral norm $||\cdot||_{sp}$ such that for every  $\gamma$ and $P_r$ ($r>0)$, $L(P_r\circ \gamma)=L(\gamma)$. Then, each $P_r$ is an isometry for the corresponding $\delta_{sp}$, that is,
$$\delta_{sp}\big (u,v\big )=\delta_{sp}\big (P_r(u),P_r(v)\big )\quad (u,v,r>0).$$ 
In this setting, for any $u,v>0$, we can let $r=u^{-1/2}$ so that  $P_r(u)=e$. As $e$ and $P_r(v)$ operate commute, 
$$\delta_{sp}\big (u,v\big )=\delta_{sp}\big (P_r(u),P_r(v))=||\log e-\log P_r(v)||_{sp}=||\log P_{u^{-1/2}}(v)||_{sp}.$$ 
Such a result was proved 
in \cite{lim}, Proposition 2.6 for the spectral norm  $||\cdot||_2$:
$$\delta_{2}(u,v)=\Big ( \sum_{i=1}^{n} \log^2 \lambda_i \Big )^{\frac{1}{2}}=||\log P_{u^{-1/2}}(v)||_2\quad (u,v\in \Omega),$$
where $\lambda_i$s are the eigenvalues of $P_{u^{-1/2}}(v)$.

\section{Concluding remarks} We end this paper with some remarks on weak majorization. 
Given $p,q\in \Rn$, we say that $p$ is weakly majorized by $q$ and write $p\underset{w}{\prec} q$ if (\ref{weak majorization}) holds for all $1\leq k\leq n$.
For an $n\times n$ complex matrix $A$, let $s(A)$ denote the vector of
singular values of $A$ written in the decreasing order.
Then, for $A,B\in {\cal M}^n$, by the Fan domination Theorem (see \cite{bhatia1}, Theorem IV.2.2),
$s(A)\underset{w}{\prec}s(B)$ if and only if $|||A|||\leq |||B|||$ for all unitary invariant norms $|||\cdot|||$ on ${\cal M}^n$. Equivalently, see (\cite{bhatia1}, Theorem IV.2.1), $s(A)\underset{w}{\prec}s(B)$ if and only if
$\Phi(s(A))\leq \Phi(s(B))$ for all symmetric gauge functions $\Phi$ on $\Rn$. (Recall that a symmetric gauge function on $\Rn$  is a permutation invariant norm on $\Rn$ that is additionally sign invariant, see \cite{bhatia1}, p. 86.)
On our Euclidean Jordan algebra $\V$, we  define weak majorization by $x\underset{w}{\prec}y\Leftrightarrow \lambda(x)\underset{w}{\prec}\lambda(y)$ in $\Rn$.
\\
Now, for  $x,y\in \V$, consider the following: 
\begin{itemize}
\item [(a)] $x\prec y$.
\item [(b)] $||x||_{sp}\leq ||y||_{sp}$ for all spectral norms $||\cdot||_{sp}$.
\item [(c)] $|x|\underset{w}{\prec}|y|.$
\end{itemize}
Then $(a)\Rightarrow (b)\Rightarrow (c)$.
Since the implication $(a)\Rightarrow (b)$ follows from (\ref{spectral convexity}), we justify the implication $(b)\Rightarrow (c)$. Assume $(b)$ holds and write $||x||_{sp}=||\lambda(x)||_{perm}$, etc., where $||\cdot||_{perm}$ is a permutation invariant norm on $\Rn$. Specializing, we can let $||\cdot||_{perm}$ be a symmetric gauge function $\Phi(\cdot)$. Hence, $(b)$ implies 
$\Phi(\lambda(x))\leq \Phi(\lambda(y))$, or equivalently, $\Phi(\lambda(|x|))\leq \Phi(\lambda(|y|))$. This, by the above 
mentioned results  on singular values,  gives $(c)$.
We remark that $(c)$ may not imply $(b)$ as there are permutation invariant norms on $\Rn$ that are not symmetric gauge functions. 
\\
Following the theme of the paper (on pointwise majorization inequalities arising from Schur products), we may ask if pointwise weak majorization inequalities of the form $A\bullet x\underset{w}{\prec} B\bullet x$ can be proved. Since requiring this inequality for all $x$ results in $\tr(A\bullet x)=\tr(B\bullet x)$ for all $x$, we are back to our majorization inequalities. So, it becomes necessary to restrict $x$ by imposing a condition such as $x\geq 0$.
Now, we may ask: For which matrices $A\in \Sn$, do we have $A\bullet x\underset{w}{\prec} x$ for all $x\geq 0$? In a private communication \cite{jeong-private}, Jeong studies the concept of doubly substochastic map in the setting of Euclidean Jordan algebras and 
shows that when $A\in \Sn$ is positive semidefinite,  the inequality  $A\bullet x\underset{w}{\prec} x$ holds 
for all $x\geq 0$ if and  only all diagonal entries of $A$ are less than or equal to one.

\end{document}